\newtheoremstyle{mystyle}{}{}{\slshape}{2pt}{\scshape}{.}{ }{} 
\newtheorem{thm}{Theorem}
\newtheorem{cor}[thm]{Corollary}
\newtheorem{prop}[thm]{Proposition}
\newtheorem{lemme}[thm]{Lemma}
\newtheorem{fait}[thm]{Fact}
\theoremstyle{definition}
\theoremstyle{mystyle}
\theoremstyle{remark}
\newcommand{\impl}{\rightarrow}
\newcommand{\pprec}{\prec^+}
\newcommand{\monster}{\mathcal U}
\DeclareMathOperator{\tp}{tp}
\DeclareMathOperator{\dcl}{dcl}
\DeclareMathOperator{\acl}{acl}
\DeclareMathOperator{\alt}{alt}
\title{On forking and definability of types in some dp-minimal theories}
\author{Pierre Simon}
\address{Universit\'e de Lyon; CNRS\\
Universit\'e Lyon 1\\
Institut Camille Jordan UMR5208\\
43 boulevard du 11 novembre 1918\\
69622 Villeurbanne Cedex, France}
\thanks{P. S. was partially supported by the European Research Council under the European Unions Seventh Framework Programme (FP7/2007-2013) / ERC Grant agreement no. 291111 and by ValCoMo (ANR-13-BS01-0006).}
\author{Sergei Starchenko}
\address{Department of Mathematics \\
University of Notre Dame\\
Notre Dame, IN 46556, USA}
\thanks{S. S. was partially supported by NSF.}
\begin{document}
\begin{abstract}We prove in particular that, in a large class of dp-minimal theories including the p-adics, definable types are dense amongst non-forking types.
\end{abstract}
\maketitle

\section{Introduction and preliminaries}
In this short note, we show how the techniques from \cite{InvTypes} can be adapted to prove the \emph{density} of definable types in a large class of dp-minimal theories. Density of definable types is the following: for any $\phi(x)$ which does not fork over a model $M$, there is a global type $p(x)$ definable over $M$ and containing $\phi(x)$. We prove this for dp-minimal $T$ satisfying an extra property---property (D)---which says that unary definable sets contain a type that is definable over the same parameters as the set. This holds in particular if definable sets have natural \emph{generic} definable types. This also holds whenever $T$ has definable Skolem functions. In particular our theorem applies to the field $\mathbb Q_p$ of $p$-adic numbers.

\smallskip
Throughout, $T$ is a complete countable theory. We let $\monster$ be a monster model. By a global type, we mean a type over $\monster$.  We write $M \pprec N$ to mean $M\prec N$ and $N$ is $|M|^+$-saturated.

The notation $\phi^0$ means $\neg \phi$ and $\phi^1$ means $\phi$.

If $M\pprec N$ and $p\in S(N)$, then $p$ is $M$-invariant if for any $b,b'\in N$ and any formula $\phi(x;y)$, $b\equiv_M b'$ implies $p\vdash \phi(x;b) \leftrightarrow \phi(x;b')$. Any $M$-invariant type over $N$ extends in a unique way to a global $M$-invariant type. Thus there is no harm in considering only global invariant types.

\smallskip
We refer to \cite{InvTypes} or to \cite{NIP_book} for basic facts about NIP theories, though we will now collect all the statements that we need.

First recall that in an NIP theory, a global type $p$ does not fork over a model $M$ if and only if it is $M$-invariant.

If $p(x)$ and $q(y)$ are two global $M$-invariant types, then $p(x)\otimes q(y)$ denotes the global type $r(x,y)$ defined as $\tp(a,b/\monster)$ where $b\models q$ and $a\models p|\monster a$ (invariant extension of $p$ to $\monster a$).

If $p(x)\otimes q(y)=q(y)\otimes p(x)$, then we say that $p$ and $q$ \emph{commute}. It is not hard to see that, in any theory $T$, a global $M$-invariant type is definable if and only if it commutes with all global types finitely satisfiable in $M$ (see \cite[Lemma 2.3]{InvTypes}).

Next we recall the notion of strict non-forking from \cite{CherKapl}. Let $M$ be a model of an NIP theory. A sequence $(b_i)_{i<\omega}$ is strictly non-forking over $M$ if for each $i<\omega$, $\tp(b_i/b_{<i}M)$ is strictly non-forking over $M$ which means that it extends to a global type $\tp(b_*/\monster)$ such that both $\tp(b_*/\monster)$ and $\tp(\monster/Mb_*)$ are non-forking over $M$. We will only need to know two facts about strict non-forking sequences (both proved in \cite{CherKapl}, see also \cite[Chapter 5]{NIP_book}):

\smallskip
(Existence) Given $b\in \monster$ and $M\models T$, there is an indiscernible sequence $b=b_0,b_1,\ldots$ which is strictly non-forking over $M$. We call such a sequence a \emph{strict Morley sequence} of $\tp(b/M)$.

\smallskip
(Witnessing property) If the formula $\phi(x;b)$ forks over $M$, then for any strictly non-forking indiscernible sequence $b=b_0,b_1,\ldots$, the type $\{\phi(x;b_i):i<\omega\}$ is inconsistent.

\smallskip
If $\phi(x;y)$ is an NIP formula, we let $\alt(\phi)$ be the \emph{alternation number} of $\phi$, namely the maximal $n$ for which there is an indiscernible sequence $(b_i:i<\omega)$ and a tuple $a$ with $\neg (\phi(a;b_i)\leftrightarrow \phi(a;b_{i+1}))$ for all $i<n$. If $(b_i:i<\omega)$ is indiscernible and $\{\phi(x;b_i):i<\alt(\phi)/2+1\}$ is consistent, then $\{\phi(x;b_i):i<\omega\}$ is also consistent.

\smallskip

We will also need the notion of ``$b$-forking" as defined in Cotter and Starchenko's paper \cite{CotStar} and as recalled in \cite{InvTypes}. For this, we assume that $T$ is NIP.

Assume we have $M\prec^{+} N$ and $b\in \monster$ such that $\tp(b/N)$ is $M$-invariant. We say that a formula $\psi(x,b;d)\in L(Nb)$ $b$-divides over $M$ if there is an $M$-indiscernible sequence $(d_i:i<\omega)$ inside $N$ with $d_0=d$ and $\{\psi(x,b;d_i):i<\omega\}$ is inconsistent. We define $b$-forking in the natural way.

\begin{fait}($T$ is NIP)
Notations being as above, the following are equivalent:

(i) $\psi(x,b;d)$ does not $b$-divide over $M$;

(ii) $\psi(x,b;d)$ does not $b$-fork over $M$;

(iii) if $(d_i:i<\omega)$ is a strict Morley sequence of $\tp(d/M)$ inside $N$, then $\{\psi(x,b;d_i):i<\omega\}$ is consistent;

(iii)$'$ if $(d_i:i<\omega)$ is a strict Morley sequence of $\tp(d/M)$ inside $N$, then $\{\psi(x,b;d_i):i<m\}$ is consistent where $m$ is greater than the alternation number of $\psi(x,y;z)$;

(iv) there is $a\models \psi(x,b;d)$ such that $\tp(a,b/N)$ is $M$-invariant.
\end{fait}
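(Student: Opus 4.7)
My approach is the cycle (iv) $\Rightarrow$ (ii) $\Rightarrow$ (i) $\Rightarrow$ (iii) $\Rightarrow$ (iv), with (iii) $\Leftrightarrow$ (iii)$'$ on the side via the alternation number property recalled just above the statement.

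For the easy implications: (ii) $\Rightarrow$ (i) is immediate from the definition of $b$-forking; (i) $\Rightarrow$ (iii) holds because a strict Morley sequence of $\tp(d/M)$ inside $N$ beginning at $d$ is in particular an $M$-indiscernible sequence in $N$ beginning at $d$, so non-$b$-dividing gives consistency. For (iv) $\Rightarrow$ (ii), suppose $a \models \psi(x,b;d)$ with $\tp(a,b/N)$ being $M$-invariant, and assume $\psi$ $b$-forks: $\psi(x,b;d) \vdash \bigvee_j \psi_j(x,b;d_j)$ with each $\psi_j(x,b;d_j)$ being $b$-dividing, witnessed by an $M$-indiscernible sequence $(d_j^k)_{k<\omega}$ in $N$ with $d_j^0 = d_j$. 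Pick $j_0$ so that $\psi_{j_0}(a,b;d_{j_0})$ holds; the $M$-invariance of $\tp(a,b/N)$ forces $\psi_{j_0}(a,b;d_{j_0}^k)$ for every $k$, contradicting inconsistency.

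The main step is (iii) $\Rightarrow$ (iv). The key observation is that since $\tp(b/N)$ is $M$-invariant, any $M$-indiscernible sequence in $N$ is automatically $Mb$-indiscernible (if $d\equiv_M d'$ in $N$ then $bd\equiv_M bd'$). So fix a sufficiently long strict Morley sequence $(d_i)_{i<\kappa}$ of $\tp(d/M)$ in $N$ with $d_0 = d$, obtained iteratively using the saturation of $N$. By (iii) and compactness, $\{\psi(x,b;d_i): i<\kappa\}$ is consistent; let $a$ realize it. Using Erd\H{o}s--Rado on $(d_i)$, extract a subsequence $(e_i)_{i<\omega}$ that is $Mab$-indiscernible with the same EM-type over $Mb$ as $(d_i)$, so each $e_i$ has $\tp(e_i/M)=\tp(d/M)$ and $\psi(a,b;e_i)$ holds. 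The final move is to ``average'' along $(e_i)$ to obtain a global type $q(x,y)\in S(\monster)$ of the pair $(a,b)$ that is $M$-invariant, contains $\psi(x,y;d)$, and extends $\tp(b/N)$ in the $y$-coordinate: for each $\chi(x,y;z)$ and $c\in\monster$, declare $\chi(x,y;c)\in q$ iff $\chi(a,b;e_i')$ holds for cofinally many $i$, where $e_i'$ is the image of $e_i$ under an $M$-automorphism sending $\tp(e_0/M)$ to $\tp(c/M)$; the cofinality is unambiguous by $M$-indiscernibility and NIP alternation bounds.

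The main obstacle is the last construction: verifying that the averaged type $q$ is genuinely $M$-invariant rather than merely $Mb$-invariant, and that its $y$-projection agrees with the $M$-invariant global extension of $\tp(b/N)$. This is where both halves of the setup are essential---the strict Morley property (which provides the $M$-indiscernibility of $(e_i)$ inside $N$) and the $M$-invariance of $\tp(b/N)$ (which lets us transport truth values across $M$-equivalent parameters without depending on the specific choice of $b$), combined with the alternation number estimate so that ``cofinally many'' becomes ``all but finitely many''.
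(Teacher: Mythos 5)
First, note that the paper does not actually prove this Fact: it is recalled from Cotter--Starchenko \cite{CotStar} and \cite{InvTypes}, so there is no in-paper argument to compare yours to. Judged on its own terms, your cycle is sensibly chosen, and the easy legs are fine: (ii)$\Rightarrow$(i), (i)$\Rightarrow$(iii), (iv)$\Rightarrow$(ii) via invariance of $\tp(a,b/N)$ applied to the witnessing sequences, and the observation that $M$-invariance of $\tp(b/N)$ upgrades $M$-indiscernible sequences in $N$ to $Mb$-indiscernible ones (which is exactly what makes the alternation bound applicable for (iii)$'\Leftrightarrow$(iii)) are all correct and are genuinely the right ingredients.

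The gap is in (iii)$\Rightarrow$(iv), which you correctly identify as the crux but do not actually prove. Two concrete problems. First, the Erd\H{o}s--Rado extraction needs a strict Morley sequence of length around $\beth_{(2^{|M|+|T|})^+}$ \emph{inside} $N$, but $N$ is only assumed $|M|^+$-saturated, so such a sequence need not exist there. Second, and more seriously, the ``averaging'' is not a construction: declaring $\chi(x,y;c)\in q$ by evaluating $\chi(a,b;e_i')$ for images $e_i'$ of the $e_i$ under an $M$-automorphism only makes sense when $c\equiv_M d$, the truth value depends on the chosen automorphism (which moves nothing relevant to $a,b$), and in any case the conclusion (iv) is about the \emph{actual} type $\tp(a,b/N)$ of your realization $a$ with the \emph{given} $b$, not about some abstractly assembled global type. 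Your argument only controls the behaviour of $a$ on instances of the single formula $\psi$ at parameters conjugate to $d$; it says nothing about $\phi(a,b;e)\leftrightarrow\phi(a,b;e')$ for arbitrary $\phi$ and arbitrary $e\equiv_M e'$ in $N$, which is the whole content of (iv). The missing idea is the standard one: either extend $\psi(x,b;d)$ to a complete type over $Nb$ containing no $b$-forking formula (the $b$-forking formulas form an ideal) and then show that any such complete type is automatically $M$-invariant --- using that for $e\equiv_M e'$ in $N$ one can find $e''\in N$ with $(e,e'')$ and $(e',e'')$ each starting $M$-indiscernible (hence $Mb$-indiscernible) sequences in $N$, so that by NIP the formula $\phi(x,b;e)\triangle\phi(x,b;e'')$ $b$-divides over $M$; or equivalently show directly that $\psi(x,b;d)$ together with all these invariance conditions is finitely consistent because each violated condition produces a $b$-dividing formula. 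Without an argument of this shape, (iii)$\Rightarrow$(iv) is unproven and the cycle does not close.
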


\medskip
Finally a theory $T$ is dp-minimal if for every $A\subset \monster$, every singleton $a$ and any two infinite sequences $I_0,I_1$ of tuples, if $I_k$ is indiscernible over $AI_{1-k}$, $k=0,1$, then for some $k\in\{0,1\}$, $I_k$ is indiscernible over $Aa$.

Any o-minimal or weakly o-minimal theory is dp-minimal, as is the theory of the fields of p-adics.

The following theorem was proved in \cite{InvTypes}:

\begin{thm}\label{dim-one}
($T$ is dp-minimal) Let $p(x)$ be a global $M$-invariant type in a single variable, then $p$ is either definable over $M$ or finitely satisfiable in $M$.
\end{thm}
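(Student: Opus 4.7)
The plan is to argue by contradiction using the dp-minimality axiom directly. Suppose $p$ is a global $M$-invariant $1$-type that is neither definable over $M$ nor finitely satisfiable in $M$. I aim to construct two mutually $M$-indiscernible sequences $I_0, I_1$ together with a singleton $a$ such that neither $I_k$ is indiscernible over $Ma$, contradicting the dp-minimality axiom applied to $a$.

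Each sequence will be a ``split Morley sequence'' witnessing non-indiscernibility over $Ma$, manufactured from one of the two failures. For the non-definability side, the commutation criterion (definability of $p$ iff $p$ commutes with all global types finitely satisfiable in $M$, \cite[Lemma 2.3]{InvTypes}) yields a type $q_1$ f.s.\ in $M$ and a formula $\psi(x;y)$ with $\psi\in p\otimes q_1$ and $\neg\psi\in q_1\otimes p$. I would realize a Morley initial segment $b_0,\ldots,b_{n-1}$ of $q_1$ over $M$, then pick $a\models p|Mb_{<n}$, then continue with $b_i\models q_1|Mab_{<i}$ for $i\ge n$. The concatenation is a Morley sequence of $q_1$ over $M$, hence $M$-indiscernible, but $\psi(a;b_i)$ is true for $i<n$ and false for $i\ge n$ by the two sides of the non-commutation; so this sequence $I_0$ is not indiscernible over $Ma$.

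For the non-finitely-satisfiable side, fix $\phi(x;c)\in p$ with $\phi(M;c)=\emptyset$, and build an analogous split Morley sequence $I_1$ from a suitable global extension $q_2$ of $\tp(c/M)$. Here $M$-invariance of $p$ automatically places $\phi\in p\otimes q_2$, so the real content is to produce a formula $\chi(x;y)$ and an extension $q_2$ (naturally a strict non-forking one, supplied by the Existence fact for strict Morley sequences) so that $\neg\chi\in q_2\otimes p$. The required asymmetry comes from $\phi(M;c)=\emptyset$ combined with the Witnessing property for strict Morley sequences, which prevents a ``$q_2$ then $p$'' realization from witnessing $\chi$.

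A Ramsey and compactness argument then places $I_0$ and $I_1$ in a mutually $M$-indiscernible configuration around the same singleton $a\models p|M$, while preserving the alternations produced in both split steps; this preservation uses that each alternation is witnessed by a single formula of bounded complexity, via the $\alt(\cdot)$ bound recalled in the preliminaries. The resulting configuration directly contradicts dp-minimality applied to $a$. The principal obstacle is the non-finite-satisfiability half: extracting the correct asymmetric non-commutation witness from $\phi(x;c)$, for which the strict non-forking machinery and witnessing property are the natural tools; a secondary technical point is arranging the mutual indiscernibility without destroying the split patterns that encode the two alternations.
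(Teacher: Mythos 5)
The paper does not actually prove Theorem~\ref{dim-one}: it is imported verbatim from \cite{InvTypes}, so there is no internal proof to compare your argument against. Judged on its own, your outline picks a plausible strategy (contradict dp-minimality with two split sequences, one for each failed conclusion), and the non-definability half is essentially fine: non-commutation with some coheir $q_1$ gives a formula $\psi$ separating $p\otimes q_1$ from $q_1\otimes p$, and inserting $a$ into a Morley sequence of $q_1$ produces an $M$-indiscernible sequence that alternates over $Ma$ --- modulo the small point that $\psi$ may carry parameters outside $M$, so the base has to be enlarged to include them.

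The genuine gap is exactly where you flag it, and the tool you propose there does not apply. You need an $M$-indiscernible sequence $(c_i)$ in $\tp(c/M)$, with $a$ inserted after $c_{<n}$, such that $\phi(a;c_i)$ holds for $i<n$ and fails for $i\ge n$. But the Witnessing property concerns formulas that \emph{fork} over $M$, whereas $\phi(x;c)\in p$ with $p$ $M$-invariant means $\phi(x;c)$ does \emph{not} fork over $M$; along a strict Morley sequence $(c_i)$ of $\tp(c/M)$ the set $\{\phi(x;c_i):i<\omega\}$ is consistent, and any $a\models p|Mc_{<\omega}$ satisfies $\phi(a;c_i)$ for \emph{all} $i$ by invariance, so no alternation appears. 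To force an alternation you must pick the tail $c_{\ge n}$ after realizing $a$ and arrange $\neg\phi(a;c_i)$ there while keeping $(c_i)$ $M$-indiscernible; the hypothesis $\phi(M;c)=\emptyset$ does not obviously deliver this (for instance, continuing with a coheir $q_2$ of $\tp(c/M)$ gives finite satisfiability in the parameter variable $y$, which says nothing about whether $\phi(a;y)\in q_2$). This is the substantive content of the theorem and it is left unproved. There is also a second, unflagged gap: dp-minimality as stated requires $I_0$ and $I_1$ to be \emph{mutually} indiscernible around the same singleton $a$. Building the two sequences separately and then invoking ``Ramsey and compactness'' does not work as described, because both alternations are anchored to the specific element $a$ and to the cuts where it was inserted; an extraction that achieves mutual indiscernibility over $M$ has no reason to preserve both split patterns over $Ma$. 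The sequences must instead be constructed coherently from the start (e.g.\ as Morley sequences of generating types that commute with one another, with $a$ inserted at a common cut), and that commutation is an additional requirement your outline neither states nor verifies.
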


\section{The main theorem}

We will say that $T$ has property (D) if for every set $A$ (of real elements) and consistent formula $\phi(x)\in L(A)$, with $x$ a single variable, there is an $A$-definable complete type $p\in S_{x}(A)$ extending $\phi(x)$.

We emphasise that the type $p$ might not extend to a global $A$-definable type.

\begin{lemme}\label{lem_Dplus}
Let $M\prec N$ and $b\in \monster$ such that $\tp(b/N)$ is $M$-definable. Assume that $p\in S_{x}(Mb)$ is a complete $Mb$-definable type, then $p$ extends to a complete type $q\in S_x(Nb)$ which is $Mb$-definable using the same definition scheme as $p$.
\end{lemme}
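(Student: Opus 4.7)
My plan is to define $q$ via the same definition scheme: for each $L$-formula $\phi(x;y,z)$ and tuple $n$ from $N$, I will declare $\phi(x;n,b)\in q$ iff the $L(Mb)$-formula $d_p\phi(n,b)$ holds, where $d_p\phi(y,z)$ denotes the $p$-definition of $\phi$. If I arrange the scheme so that $d_p(\neg\phi)\equiv \neg d_p\phi$, then $q$ is automatically complete, so everything reduces to showing that $q$ is consistent.

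For consistency, I will invoke compactness to reduce to a single conjunction $\Phi(x;n,b)=\bigwedge_{i\le k}\phi_i(x;n_i,b)$ of formulas in $q$, with $n$ from $N$, for which $d_p\Phi(n,b)$ holds; the goal then becomes to show that $\exists x\,\Phi(x;n,b)$. I would form the $L(Mb)$-formula $\chi(y):=d_p\Phi(y,b)\wedge \neg\exists x\,\Phi(x;y,b)$ and observe first that $\chi(y)$ has no realization in $M$: for $m\in M$, $d_p\Phi(m,b)$ asserts $\Phi(x;m,b)\in p$, which, since $p$ is a consistent complete type, is inconsistent with $\neg\exists x\,\Phi(x;m,b)$.

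The key upgrade I need is from ``no realization in $M$'' to ``no realization in $N$'', and this is where the hypothesis that $\tp(b/N)$ is $M$-definable enters. Viewed as a condition on $y\in N$, the formula $\chi(y)$ depends on $b$ only through evaluations of $L(N)$-formulas at $b$; applying the $M$-definition scheme of $\tp(b/N)$ to each such evaluation, while keeping the fixed $M$-parameters of $\chi$ fixed, I expect to obtain an $L(M)$-formula $\eta(y)$ equivalent to $\chi(y)$ on $N$. Since $\eta$ has no realization in $M$ and $M\prec N$, elementarity forces $\eta$ to have no realization in $N$ either, so $\chi(n)$ must fail and $\exists x\,\Phi(x;n,b)$ holds, as required.

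The main obstacle I anticipate is the careful bookkeeping of parameters in the last step: inside $\chi$ one has to separate the fixed $M$-parameters from the variable tuple $y$ before invoking the definition scheme of $\tp(b/N)$, so that the resulting $\eta$ is genuinely an $L(M)$-formula in $y$ and the elementarity argument can be applied cleanly.
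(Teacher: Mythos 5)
Your proof is correct and takes essentially the same route as the paper: define $q$ by the very same definition scheme and transfer consistency and completeness from $M$ to $N$. The paper compresses your final elementarity step into the single remark that $\tp(b/N)$, being $M$-definable, is an heir of $\tp(b/M)$; your argument producing the $L(M)$-formula $\eta(y)$ from $\chi(y)$ is exactly the standard derivation of that heir property, so the two proofs coincide in substance.
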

\begin{proof}
For each formula $\phi(x;y,b)\in L(b)$, there is by hypothesis a formula $d\phi(y;b)\in L(M)$ such that for every $d\in M^{|y|}$ we have $p\vdash \phi(x;d,b)$ if and only if $\monster \models d\phi(d;b)$. We have to check that the scheme $\phi(x;y,b) \mapsto d\phi(y;b)$ defines a consistent complete type over $Nb$. This follows at once from the fact that $\tp(b/N)$ is an heir of $\tp(b/M)$. Let us check completeness for example. Assume that there is some $n\in N$ and formula $\phi(x;y,b)$ such that $\monster \models \neg d\phi(n;b)\wedge \neg d(\phi^0)(n;b)$. By the heir property, there must be such a tuple $n$ in $M$, which is a contradiction.
\end{proof}

\begin{lemme}\label{lem_induct}($T$ is NIP)
Let $M\pprec N$, $n<\omega$ and assume that any formula $\theta(y;d)\in L(N)$ with $|y|=n$ and non-forking over $M$ extends to an $M$-definable type over $N$. Let $\phi(x,y;d)\in L(N)$ be non-forking over $M$, where $|y|=n$ and $|x|=1$. Then we can find a tuple $(a,b)\models \phi(x,y;d)$ such that  $\tp(a,b/N)$ is $M$-invariant and $\tp(b/N)$ is definable (over $M$).
\end{lemme}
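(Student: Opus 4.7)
The plan is to use the inductive hypothesis to produce a suitable $b$ with $\tp(b/N)$ $M$-definable, and then apply Fact to extract the required $a$. Fix an integer $m$ greater than the alternation number of $\phi(x,y;z)$, and use Existence to pick a strict Morley sequence $(d_i)_{i<\omega}$ of $\tp(d/M)$ inside $N$, with $d_0=d$. The key auxiliary formula is
\[
  \chi(y) \;:=\; \exists x \bigwedge_{i<m}\phi(x,y;d_i),
\]
an $L(N)$-formula in the variable $y$ of length $n$.

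The first step is to check that $\chi(y)$ is non-forking over $M$. By NIP the non-forking formula $\phi(x,y;d)$ extends to a global $M$-invariant type $r(x,y)$. Since $r$ is $M$-invariant and each $d_i\equiv_M d$, every $\phi(x,y;d_i)$ lies in $r$; hence the projection $r|_y$ is a global $M$-invariant type containing $\chi(y)$, so $\chi$ is non-forking over $M$.

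Now apply the inductive hypothesis of the lemma (with $|y|=n$) to obtain an $M$-definable $q\in S_y(N)$ extending $\chi$. Let $b\models q$; then $\tp(b/N)=q$ is $M$-definable, and $b\models\chi$ forces $\{\phi(x,b;d_i):i<m\}$ to be consistent. Since $(d_i)$ is strict Morley in $N$ and $m$ exceeds the alternation number, the equivalence (iii)$'\Leftrightarrow$(i) of Fact, together with the observation that a single strict Morley witness of consistency is enough to rule out $b$-dividing (the contrapositive of the Witnessing-style direction), gives that $\phi(x,b;d)$ does not $b$-fork over $M$. Fact~(i)$\Rightarrow$(iv) then supplies $a\models\phi(x,b;d)$ with $\tp(a,b/N)$ $M$-invariant, and $(a,b)$ is as required.

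The only substantive step is isolating $\chi$ and checking it is non-forking so that the inductive hypothesis applies; once this is in hand, the $b$-forking machinery and the alternation principle from the preliminaries combine mechanically to assemble $(a,b)$.
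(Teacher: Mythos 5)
Your proof is correct and follows essentially the same route as the paper: the same auxiliary formula $(\exists x)\bigwedge_{i<m}\phi(x,y;d_i)$ along a strict Morley sequence, the same verification that it is non-forking via a global $M$-invariant extension of $\phi(x,y;d)$, the inductive hypothesis to get an $M$-definable $\tp(b/N)$, and the $b$-forking Fact to produce $a$. No substantive differences.
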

\begin{proof}
Let $(d_i:i<\omega)$ be a strict Morley sequence of $\tp(d/M)$ inside $N$. Let $m<\omega$ be greater than the alternation number of $\phi(x,y;z)$. As the formula $\phi(x,y;d)$ does not fork over $M$, it extends to a global $M$-invariant type $p$. Then the conjunction $\psi(x,y;\bar d)=\bigwedge_{i<m} \phi(x,y;d_i)$ is in $p$. In particular it is consistent and does not fork over $M$. The same is true for $\theta(y;\bar d)=(\exists x)\psi(x,y;\bar d)$. By hypothesis, we can find some $b\in \monster$ such that $\tp(b/N)$ is $M$-definable and $\monster \models \theta(b;\bar d)$. We claim that the formula $\phi(x;b,d)$ does not $b$-fork over $M$. Assume that it did. Then the conjunction $\bigwedge_{i<m} \phi(x,b;d_i)$ would be inconsistent. But this contradicts the fact that $\theta(b;\bar d)$ holds. Hence we may find $a\in \monster$ such that $\phi(a,b;d)$ holds and $\tp(a,b/N)$ does not fork over $M$ (equivalently is $M$-invariant).
\end{proof}

\begin{thm}\label{th_propD}
Assume that $T$ is dp-minimal and has property (D). Let $M\models T$ and $\phi(x;d)\in L(\monster)$ be non-forking over $M$. Then $\phi(x;d)$ extends to a complete $M$-definable type.
\end{thm}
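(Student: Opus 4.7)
I would argue by induction on $n = |x|$. For the base case $n = 1$, extend $\phi(x;d)$ to a global $M$-invariant type $p$, which is possible since $\phi$ does not fork over $M$ in the NIP theory $T$. Theorem~\ref{dim-one} splits into two subcases: either $p$ is already $M$-definable and we are done, or $p$ is finitely satisfiable in $M$. In the second subcase $\phi(x;d)$ is realized by some $a \in M$, and the global realized type of $a$ is $M$-definable and extends $\phi$. Property (D) is not used at this stage.

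For the inductive step from $n$ to $n+1$, decompose $x = (x',y)$ with $|x'|=1$ and $|y|=n$, and fix a sufficiently saturated $M\pprec N$ containing $d$. The inductive hypothesis supplies the hypothesis of Lemma~\ref{lem_induct}, yielding $(a,b)\models\phi(x',y;d)$ with $\tp(a,b/N)$ $M$-invariant and $\tp(b/N)$ $M$-definable; let $q(y)$ denote the global $M$-definable extension of $\tp(b/N)$ provided by the inductive hypothesis. The crux is then to produce a global $Mb$-definable type $\tilde p(x')$ containing $\phi(x';b,d)$. Granting such a $\tilde p$, one rewrites its $Mb$-definable scheme as an $L(M)$-scheme with $b$ an explicit parameter, composes it with the $M$-definable scheme of $q$, and obtains an $L(M)$-scheme defining a global $M$-definable type $r(x',y)$ that contains $\phi(x',y;d)$. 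To produce $\tilde p$, consider the global $Mb$-invariant extension $\bar p$ of $\tp(a/Nb)$, which is a type in one variable; by dp-minimality together with property (D), $\bar p$ should satisfy a dichotomy over $Mb$: either $\bar p$ is $Mb$-definable, in which case we take $\tilde p := \bar p$, or $\phi(x';b,d)$ is realized by some $a' \in Mb$, in which case we take $\tilde p$ to be the realized global type of $a'$, which is patently $Mb$-definable.

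The main obstacle will be the $Mb$-version of the dp-minimal dichotomy for $\bar p$. Theorem~\ref{dim-one} as stated gives the dichotomy over a model, but $Mb$ is generally not a model; invoking it over a model $M'\supseteq Mb$ yields only $M'$-definability or finite satisfiability in $M'$, which is too coarse for the subsequent combination with $q$ to land in $L(M)$ (the canonical base of the scheme only sits in $\dcl^{eq}(Mb)$, and realizations may only be found in $M'$). Property (D) is what sharpens the dichotomy from base $M'$ down to base $Mb$, both in the definable branch (pushing the scheme down to $L(Mb)$) and in the finitely satisfiable branch (relocating a realization of $\phi(x';b,d)$ into $Mb$ rather than the auxiliary model $M'$). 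Verifying this descent, and keeping careful track that the resulting combined scheme indeed lies in $L(M)$, is the essential work of the proof.
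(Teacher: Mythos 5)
Your skeleton agrees with the paper's proof: induction on $|x|$, the base case handled exactly as you describe from Theorem \ref{dim-one} (with property (D) unused there), the inductive step launched from Lemma \ref{lem_induct} to obtain $(a,b)\models\phi$ with $\tp(a,b/N)$ $M$-invariant and $\tp(b/N)$ $M$-definable, and the finish by composing an $Mb$-definition scheme for the fibre with the $M$-scheme for $\tp(b/N)$ (this is what Lemma \ref{lem_Dplus} is for). But the heart of the argument is precisely the step you defer: producing an $Mb$-definable type in the fibre containing $\phi(x';b,d)$. The ``dichotomy over $Mb$'' you propose is not what the paper proves, and I do not see how to prove it. $Mb$ is not a model, so Theorem \ref{dim-one} applied over a model $M'\supseteq Mb$ yields only finite satisfiability in $M'$, as you note; but property (D) cannot ``sharpen'' this, because property (D) only produces a definable type \emph{inside a given consistent formula over $Mb$}. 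Before it can be invoked you must first exhibit a consistent $L(Mb)$-formula in $x'$ alone that implies $\phi(x';b,d)$ --- and $d\notin Mb$, so this is the whole difficulty. Your ``finitely satisfiable branch'' (a realization of $\phi(x';b,d)$ in $Mb$) is simply false in general absent Skolem functions.

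The argument that fills this gap in the paper runs as follows. If $p=\tp(a,b/N)$ is not definable, it fails to commute with some global $q$ finitely satisfiable in $M$. Realizing $p\otimes q$ and appending a Morley sequence of $q$ gives a sequence $\bar c$ indiscernible over $Nb$ (since $\tp(b/N)$, being definable, commutes with $q$) but not over $Nab$. By dp-minimality, every global type $r$ finitely satisfiable in $N$ then satisfies $r|_{Nb\bar c}\vdash r|_{Nab\bar c}$, hence decides $\phi(a,b;y)$ already over $Nb\bar c$; compactness extracts finitely many formulas over $Nb\bar c$ covering all such types, each implying $\phi^l(a,b;y)$ for some $l$. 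Since $d$ itself satisfies the side $l=1$, and since $\tp(\bar c/Nb)$ and then $\tp(e/Mdb)$ (for the remaining $N$-parameters $e$, first moved into a small $N_1$ with $\tp(N_1/Md)$ finitely satisfiable in $M$) are finitely satisfiable in $M$, all parameters other than $b$ can be replaced by elements of $M$ while preserving both $\theta_1(d;b,\bar c',e')$ and the consistency of $(\forall y)(\theta_1(y;b,\bar c',e')\impl\phi(x';y))$. That last formula is the consistent $L(Mb)$-formula implying $\phi(x';b,d)$ to which property (D) is applied; Lemma \ref{lem_Dplus} lifts the resulting $Mb$-definable type to $Nb$, and its realization $a'$ --- a new element, not your $a$ --- yields the required $M$-definable $\tp(a'b/N)$ containing $\phi$. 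None of this appears in your proposal, so as written the proof has a genuine gap at its central step.
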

\begin{proof}
The proof is an adaptation of the argument given for Proposition 2.7 in \cite{InvTypes}. We argue by induction on the length of the variable $x$.

\smallskip\noindent
\underline{$|x|=1$}: Assume that $|x|=1$ and take $p(x)$ a global type extending $\phi(x;d)$ and non-forking over $M$. If $p$ is definable, we are done. Otherwise, by Theorem \ref{dim-one}, $p$ is finitely satisfiable in $M$. This implies that $\phi(x;d)$ has a solution $a$ in $M$. Then $\tp(a/\monster)$ does the job.

\smallskip \noindent
\underline{Induction}:
Assume we know the result for $|x|=n$, and consider a non-forking formula $\phi(x_1,x_2;d)$, where $|x_2|=n$ and $|x_1|=1$. Let $N\succ M$ sufficiently saturated, with $d\in N$. Using the induction hypothesis and Lemma \ref{lem_induct}, we can find a tuple $(a_1,a_2)\models \phi(x_1,x_2;d)$ such that  $\tp(a_1,a_2/N)$ is $M$-invariant and $\tp(a_2/N)$ is definable (over $M$).

If $p=\tp(a_1,a_2/N)$ is definable we are done. Otherwise, there is some type $q\in S(N)$ finitely satisfiable in $M$ such that $p$ does not commute with $q$.

Now let $c\in \monster$ such that $(a_1\hat{~}a_2,c)\models p\otimes q$. Let $I$ be a Morley sequence of $q$ over everything. As $\tp(a_2/N)$ is definable, it commutes with $q$. Therefore the sequence $\bar c=c+I$ is indiscernible over $Na_2$. However, it is not indiscernible over $Na_1a_2$. Take some $M\pprec N_1\pprec N$ with $\tp(N_1/Md)$ finitely satisfiable in $M$. 

Take $r\in S(\monster)$ finitely satisfiable in $N$. Let $b\models r|_{Na_2\bar c}$. Build a Morley sequence $J$ of $r$ over everything. Then $b+J$ is indiscernible over $Na_2\bar c$ and $\bar c$ is indiscernible over $NbJ$. As $\bar c$ is not indiscernible over $Na_1a_2$, by dp-minimality, $b+J$ must be indiscernible over $Na_1a_2$. Hence $b\models r|_{Na_1a_2\bar c}$.

We have shown that $r|_{Na_2\bar c} \vdash r|_{Na_1a_2\bar c}$. Let $l=l_r \in \{0,1\}$ such that $r(y) \vdash \phi^l(a_1,a_2;y)$. Then $r(y)|_{Na_2\bar c} \vdash \phi^l(a_1,a_2;y)$. By compactness, there is a formula $\theta_r(y)$ in $r(y)|_{Na_2\bar c}$ which already implies $\phi^l(a_1,a_2;y)$. Using compactness of the space of global $N$-finitely satisifiable types, we can extract from the family $(\theta_r(y))_r$ a finite subcover $\mathcal C$. Let $\theta_l(y)$ be the disjonction of the formulas in $\mathcal C$ that imply $\phi^l(a_1,a_2;y)$. Summing up, we have:

 $\monster\models \theta_l(y) \impl \phi^l(a_1,a_2;y)$, $l=0,1$, and every type finitely satisfiable in $N$ satisfies either $\theta_1(y)$ or $\theta_2(y)$. In particular, this is true of any point $n\in N$.

Write $\theta_1(y)$ as $\theta_1(y;a_2,\bar c,e)$ exhibiting all parameters, with $e\in N$. By invariance of $\tp(a_1,a_2,\bar c/N)$, we may assume that $e\in N_1$ and in particular $\tp(e/Md)$ is finitely satisfiable in $M$.

As $\tp(\bar c/Na_2)$ is finitely satisfiable in $M$, there is $\bar c'\in M$ such that:
$$\models \theta_1(d;a_2,\bar c', e) \wedge (\exists x)(\forall y)(\theta_1(y;a_2,\bar c', e)\impl \phi(x;y)).$$

Next, $\tp(e/Md)$ is finitely satisfiable in $M$. As $\tp(a_2/N)$ is $M$-definable, also $\tp(e/Mda_2)$ is finitely satisifiable in $M$ and we may find $e'\in M$ such that the previous formula holds with $e$ replaced by $e'$.

By property (D), there is some $Ma_2$-definable type $p_1(x_1)\in S(Ma_2)$ containing the formula $(\forall y)(\theta_1(y;a_2,\bar c',e')\impl \phi(x;y))$. By Lemma \ref{lem_Dplus}, $p_1$ extends to a complete $Ma_2$-definable type over $Na_2$. Let $a'_1$ realise that type. Then $\tp(a'_1,a_2/N)$ is $M$-definable and we have $\models \phi(a'_1,a_2;d)$ as required.
\end{proof}

%

Theorem \ref{th_propD} was proved for \emph{unpackable VC-minimal theories} by Cotter and Starchenko in \cite{CotStar}. This class contains in particular o-minimal theories (for which the result was established earlier by Dolich \cite{Dol}) and C-minimal theories with infinite branching. We show now that our result generalises Cotter and Starchenko's and covers some new cases, in particular the field of $p$-adics.

\begin{lemme}\label{lem_acldef}
Let $A$ be any set of parameters and $p(x)$ be a global $\acl(A)$-definable type. Then $p|_A$ is $A$-definable.
\end{lemme}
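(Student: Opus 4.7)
The plan is to symmetrize the $\acl(A)$-definition of $p$ over the finite Galois orbit of its parameters under $\mathrm{Aut}(\monster/A)$, and then invoke the standard fact that an $A$-invariant definable set is $L(A)$-definable. The point is that although $p$ itself may genuinely use parameters from $\acl(A)\setminus A$, those parameters admit only finitely many $A$-conjugates, so a disjunction over the orbit will be $A$-invariant; and because restricting any $A$-conjugate of $p$ to $A$ gives back $p|_A$, such a disjunction will still correctly compute $p|_A$ on $A$-tuples.

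Concretely, I would fix a formula $\phi(x;y)$ and write its $p$-definition as $d_p\phi(y;e)$ for some $e\in\acl(A)$. Let $e=e_1,\ldots,e_k$ enumerate the (finitely many) $A$-conjugates of $e$, and for each $j$ pick $\sigma_j\in\mathrm{Aut}(\monster/A)$ with $\sigma_j(e)=e_j$; then $p_j := \sigma_j(p)$ has $\phi$-definition $d_p\phi(y;e_j)$. Since $\sigma_j$ fixes $A$ pointwise, an easy computation gives $p_j|_A=p|_A$: for $b\in A$, $\sigma_j(p)\vdash\phi(x;b)$ iff $p\vdash\phi(x;\sigma_j^{-1}(b))=\phi(x;b)$. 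Setting $\psi(y) := \bigvee_{j=1}^{k} d_p\phi(y;e_j)$, one then checks that for $b\in A$, $p\vdash\phi(x;b)$ iff $\models\psi(b)$: the forward direction uses the disjunct $j=1$, and the converse uses that any satisfied disjunct forces $p_j\vdash\phi(x;b)$, which gives $p\vdash\phi(x;b)$ via $p_j|_A=p|_A$.

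The remaining step is to replace $\psi$ by an $L(A)$-formula. Because $\mathrm{Aut}(\monster/A)$ permutes $\{e_1,\ldots,e_k\}$, it fixes $\psi$ up to logical equivalence, so $\psi(\monster)$ is $\mathrm{Aut}(\monster/A)$-invariant. A routine compactness argument on $S_y(A)$---each complete type over $A$ is either contained in or disjoint from $\psi(\monster)$, and one extracts a finite cover by formulas witnessing this---produces $\psi'(y)\in L(A)$ defining the same set, and $\psi'$ is then the desired $\phi$-definition of $p|_A$. I do not anticipate any real obstacle; the only mildly delicate step is this last compactness/orbit argument, but it is entirely standard.
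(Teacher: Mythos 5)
Your argument is correct and is essentially the paper's: both exploit that the defining parameter has a finite orbit under $\mathrm{Aut}(\monster/A)$ and that every $A$-conjugate of $p$ has the same restriction to $A$. The only difference is presentational: where you form the orbit disjunction $\bigvee_{j} d_p\phi(y;e_j)$ (still an $\acl(A)$-formula) and then descend to $L(A)$ via the invariance-implies-definability compactness argument, the paper writes the $A$-definition directly as $(\exists z)(\chi(z)\wedge d\phi(y;z))$ with $\chi\in L(A)$ isolating $\tp(e/A)$ --- a formula defining exactly the same set as your disjunction, so your final descent step can be short-circuited.
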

\begin{proof}
Take $\phi(x;y)\in L$ and let $d\phi(y;a)$, $a\in \acl(A)$, be the $\phi$-definition of $p$. Then $\tp(a/A)$ is isolated by a formula $\phi(z)\in L(A)$. Define $D\phi(y)= (\exists z)\phi(z)\wedge d\phi(y;z)$. Then $D\phi(y)$ is a formula over $A$ and defines the same set on $A$ as $d\phi(y)$.
\end{proof}

\begin{prop}\label{prop_D}
The following classes of theories have property (D):

$\bullet$ theories with definable Skolem functions;

$\bullet$ dp-minimal linearly ordered theories;

$\bullet$ unpackable VC-minimal theories.
\end{prop}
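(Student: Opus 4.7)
The proposition splits into three essentially independent claims, so I would prove each bullet separately, as the arguments share very little structure.

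For \emph{definable Skolem functions} the conclusion is immediate. Given a consistent $\phi(x)\in L(A)$, apply a Skolem function to produce $a\in\dcl(A)$ with $\phi(a)$; if $\chi(x)\in L(A)$ isolates $a$ in $\dcl(A)$, then the scheme $d\psi(y):=(\exists x)(\chi(x)\wedge\psi(x;y))$ is the $A$-definition of $\tp(a/A)$, which extends $\phi$. For \emph{unpackable VC-minimal theories} the conclusion is essentially already contained in Cotter--Starchenko's work \cite{CotStar}: their construction, using the swiss-cheese decomposition of unary definable sets available in VC-minimal theories together with unpackability, produces an $A$-definable complete type extending any consistent unary $L(A)$-formula. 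I would simply quote this, possibly using Lemma~\ref{lem_acldef} to descend from an $\acl(A)$-definition to an $A$-definition where the hypothesis of unpackability is precisely what is needed to make such a descent canonical.

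The substantive case is the \emph{dp-minimal linearly ordered} one. The plan is to use the structure of unary definable sets in such theories to locate, for each consistent $\phi(x)\in L(A)$, an $A$-definable ``leftmost extremity'' of $\phi(\monster)$: either a minimum element, if one exists, or an $A$-definable Dedekind cut bounding the leftmost component of $\phi$ from below. If the extremity is an attained minimum $e$, take $\tp(e/A)$; otherwise take the complete type consisting of $\phi(x)$ together with all $L(A)$-formulas forcing $x$ to lie immediately to the right of the cut inside $\phi$. The latter type is $A$-definable precisely because the cut is, and because the germ of $\phi$ immediately to the right of the cut is uniquely determined by the cut itself. The structural ingredient needed---that any $A$-definable unary set in a dp-minimal linearly ordered theory has $A$-definable leftmost (and rightmost) extremities---can either be quoted from the existing literature on dp-minimal ordered structures or extracted by hand using Theorem~\ref{dim-one} applied to the one-variable invariant types extending $\phi$, each of which corresponds to a cut in the line.

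The main obstacle is the dp-minimal ordered case, and specifically the structural claim that $A$-definable unary sets have $A$-definable extremities: cases (1) and (3) are respectively immediate and a direct quotation, while case (2) requires a genuine structural input before the construction of the definable type becomes routine.
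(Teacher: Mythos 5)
Your first and third bullets are essentially the paper's argument: the Skolem case is identical, and the VC-minimal case is in both treatments a quotation of Cotter--Starchenko (Swiss cheese decomposition, then the generic type of the interior of the outer ball, which is $\acl(A)$-definable, followed by Lemma~\ref{lem_acldef}). One small correction there: unpackability is what gives uniqueness of the Swiss cheese decomposition and hence an $\acl(A)$-definable cheese inside $\phi$; the descent from $\acl(A)$ to $A$ is not where unpackability enters --- it is the completely general Lemma~\ref{lem_acldef}.

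The dp-minimal ordered case, however, has a genuine gap, and it is exactly at the ``structural ingredient'' you flag. First, an $A$-definable unary set in a dp-minimal linearly ordered theory need not have a ``leftmost component'' or an $A$-definable extremity in your sense: such theories are far from weakly o-minimal (take $(\mathbb Z,+,<)$ and $\phi(x)$ defining the even numbers). Second, and more seriously, even when the relevant cut is $A$-definable, the partial type ``$\phi(x)$ together with everything forcing $x$ immediately to the right of the cut'' is in general not complete, so the claim that the germ of $\phi$ at the cut determines a unique type fails: in the Presburger example the type of an even element near $-\infty$ still has one completion for each coherent choice of residues modulo $3,5,\dots$. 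Finally, Theorem~\ref{dim-one} concerns types invariant over a \emph{model}, so it cannot be applied directly over an arbitrary parameter set $A$ to extract your structural claim. The paper's actual argument is designed to avoid all of this: it forms the partial type $p_0$ at the upper edge of $\phi(\monster)$, quotes Lemma 2.8 of \cite{dpmin} to conclude that \emph{every} completion of $p_0$ is a definable global type, deduces that the set of completions is bounded and $A$-invariant, hence that each completion is $\acl^{eq}(A)$-definable, and only then applies Lemma~\ref{lem_acldef} to make the restriction to $A$ an $A$-definable type. No uniqueness of the completion and no $A$-definable cut are ever needed; the $\acl^{eq}(A)$-definability comes from a boundedness-plus-invariance argument. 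To repair your proof you would need to replace the ``definable extremity'' claim by this (or an equivalent) mechanism.
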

\begin{proof}
Let $T$ have definable Skolem functions and take a formula $\phi(x)\in L(A)$. Then we can find $a\in \dcl(A)$ such that $\models \phi(a)$, and thus $\tp(a/A)$ is as required.
%

\smallskip
Assume now that $T$ is dp-minimal and that the language contains a binary symbol $<$ such that $T\vdash ``x<y$ defines a linear order". Let $\phi(x)\in L(A)$ be a formula with $|x|=1$. If the formula $\phi(x)$ contains a greatest element, then that element is definable from $A$, and we conclude as in the previous case. Otherwise, consider the following partial type over $\monster$: $$p_0=\{a<x : a\in \phi(\monster)\} \cup \{x<b : \phi(\monster)<b\}\cup \{\phi(x)\}.$$ Let $\mathfrak P$ be the set of completions of $p_0$ over $\monster$. By Lemma 2.8 from \cite{dpmin}, any $p\in \mathfrak P$ is definable over $M$. In particular, $\mathfrak P$ is bounded. Since $\mathfrak P$ is $A$-invariant (setwise), we conclude that every $p\in \mathfrak P$ is $\acl^{eq}(A)$-definable. Let $p$ be such a type. Then by Lemma \ref{lem_acldef} $p|_A$ is $A$-definable.
\smallskip

Finally, let $T$ be  an unpackable VC-minimal theory.  We will use results and terminology 
from \cite{CotStar}. Let  $\phi(x)\in L(A)$ be a consistent formula with $|x|=1$.
We work in $T^{\mathrm eq}$. By the uniqueness of Swiss  cheese decomposition, there is a consistent formula $\theta(x)$ over $\acl(A)$ that defines a Swiss cheese 
and $\models \theta(x)\rightarrow \phi(x)$. The outer ball $B$ of  $\theta(x)$ is definable over $\acl(A)$. The generic type the interior of $B$ (see 
\cite[Definition 2.9]{CotStar}) is a global type definable over $\acl(A)$. Now use Lemma \ref{lem_acldef}.
\end{proof}

Knowing that the theory of the $p$-adics has definable Skolem functions, we obtain the following corollary.
\begin{cor}
Let $T=Th(\mathbb Q_p)$ and $M\models T$, then any formula in $L(\monster)$ which does not fork over $M$ extends to an $M$-definable type.
\end{cor}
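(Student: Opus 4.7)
The plan is to deduce the corollary directly from Theorem \ref{th_propD} applied to $T = Th(\mathbb Q_p)$. This requires verifying the two hypotheses of that theorem, namely that $T$ is dp-minimal and that $T$ has property (D).

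Dp-minimality of $Th(\mathbb Q_p)$ is explicitly noted in the preliminaries, so nothing new is needed there. For property (D), I would invoke the first bullet of Proposition \ref{prop_D}: any theory with definable Skolem functions has property (D). The remaining task is then to recall the classical fact, proved by van den Dries, that $Th(\mathbb Q_p)$ admits definable Skolem functions (alternatively, this is a consequence of Denef's cell decomposition together with definable selection). The hypothesis of the corollary flags this by saying ``knowing that the theory of the $p$-adics has definable Skolem functions.''

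With both hypotheses in hand, Theorem \ref{th_propD} applies verbatim: for $M \models T$ and any $\phi(x;d) \in L(\monster)$ non-forking over $M$, the formula $\phi(x;d)$ extends to a complete $M$-definable global type. That is exactly the content of the corollary.

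There is essentially no obstacle: the corollary is a one-line consequence of Theorem \ref{th_propD} and Proposition \ref{prop_D}, combined with a standard fact about the $p$-adics. The only ``hard'' input—that definable Skolem functions suffice for (D)—is already handled inside Proposition \ref{prop_D} via the observation that one can realize any $\phi(x) \in L(A)$ by an element of $\dcl(A)$.
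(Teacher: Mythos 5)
Your proposal is correct and is exactly the argument the paper intends: dp-minimality of $Th(\mathbb Q_p)$ is noted in the preliminaries, definable Skolem functions give property (D) via the first bullet of Proposition \ref{prop_D}, and Theorem \ref{th_propD} then yields the conclusion. The paper leaves this as an immediate consequence (prefaced only by the remark about Skolem functions), so your write-up matches its proof in substance and approach.
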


\bibliographystyle{asl}

\bibliography{DefTypes}

\end{document}